\newcommand{\ZZ}{\mathbb{Z}}
\renewcommand{\leq}{\leqslant}
\renewcommand{\geq}{\geqslant}
\newcommand{\lcm}{\mathop{\operator@font lcm}}
\newcommand{\diag}{\mathop{\operator@font diag}}
\newcommand{\im}{\mathop{\operator@font im}}
\newcommand{\ord}[1]{\mathop{\operator@font ord}(#1)}
\newcommand{\supp}[1]{\mathop{\operator@font supp}(#1)}
\newcommand{\C}[1]{\mathop{\operator@font C \kern 0pt}_{#1}}
\newcommand{\Sym}[1]{\mathop{\operator@font S \kern 0pt}_{#1}}
\newcommand{\SYM}[1]{\mathop{\operator@font Sym \kern 0pt}(#1)}
\newcommand{\Alt}[1]{\mathop{\operator@font A \kern 0pt}_{#1}}
\newcommand{\ALT}[1]{\mathop{\operator@font Alt \kern 0pt}(#1)}
\newcommand{\GL}[2]{\mathop{\operator@font GL}(#1,#2)}
\newcommand{\SL}[2]{\mathop{\operator@font SL}(#1,#2)}
\newcommand{\PSL}[2]{\mathop{\operator@font PSL}(#1,#2)}
\newcommand{\AFF}[1]{\mathop{\operator@font AFF}(#1)}
\newcommand{\RCWA}[1]{\mathop{\operator@font RCWA}(#1)}
\newcommand{\RCWAp}[1]{\mathop{\operator@font RCWA^+}(#1)}
\newcommand{\CT}[1]{\mathop{\operator@font CT}(#1)}
\newcommand{\CTP}[2]{\mathop{\operator@font CT}_{#1}(#2)}
\newcommand{\CTint}[1]{\mathop{\operator@font CT}_{\rm int}(#1)}
\newcommand{\CS}[1]{\mathop{\operator@font CS}(#1)}
\newcommand{\CSP}[2]{\mathop{\operator@font CS}_{#1}(#2)}
\newcommand{\Mod}[1]{\mathop{\operator@font Mod}(#1)}
\newcommand{\Mult}[1]{\mathop{\operator@font Mult}(#1)}
\newcommand{\Div}[1]{\mathop{\operator@font Div}(#1)}
\theoremstyle{definition} \newtheorem{StrongCFSGroupDefinition}{Definition}[section]
\theoremstyle{plain}      \newtheorem{FactorSubgroupsLemma}[StrongCFSGroupDefinition]{Lemma}
\theoremstyle{plain}      \newtheorem{StrongCFSGroupSubgroupsLemma}[StrongCFSGroupDefinition]{Lemma}
\theoremstyle{plain}      \newtheorem{StrongCFSGroupTheorem}[StrongCFSGroupDefinition]{Theorem}
\theoremstyle{plain}      \newtheorem{StrongCFSGroupCorollary}[StrongCFSGroupDefinition]{Corollary}
\begin{document}

\title[Groups in which every Lagrange subset is a factor]
{Groups in which every Lagrange subset is a factor}

\author{M.H. Hooshmand and Stefan Kohl}

\address{Department of Mathematics, Shi.C., Islamic Azad University, Shiraz, Iran}

\email{hadi.hooshmand@gmail.com}

\address{Fridtjof-Nansen-Strasse 3, 18106 Rostock, Germany}

\email{stefan.kohl.rostock@gmail.com}

\subjclass[2010]{20-04, 20B99}

\date{}

\begin{abstract}
  We determine the finite groups \(G\) in which every subset \(A \subseteq G\)
  of cardinality dividing the order of \(G\) is a \emph{factor}, i.e. has a complement
  \(B \subseteq G\) of cardinality \(|G|/|A|\) such that \(G = A \cdot B\)
  or \(G = B \cdot A\).
\end{abstract}

\maketitle

\enlargethispage*{4mm}
\thispagestyle{empty}

\section{Introduction and preliminaries} \label{Introduction}

Given two subsets \(A\) and \(B\) of a group \(G\), we put \(AB := \{ab \ | \ a \in A, b \in B\}\).
We say that the (subset) product \(AB\) is \emph{direct}, and denote it by \(A \cdot B\), if for
every element \(x \in AB\) there is only one pair \((a,b) \in A \times B\) such that \(x = ab\).
Hence we have \(G = A \cdot B\) if and only if \(G = AB\) and the product \(AB\) is direct.
If \(G\) is finite, we have \(G = A \cdot B\) if and only if \(G = AB\) and \(|G| = |A||B|\).

We call \(A\) a \emph{left} (respectively \emph{right}) \emph{factor} of \(G\) if there exists
a subset \(B \subseteq G\) such that \(G = A \cdot B\) (respectively \(G = B \cdot A\)).
In this situation, we call \(B\) a \emph{right} (respectively \emph{left}) \emph{factor complement}
of \(A\). If the subset \(A\) is a left or a right factor of \(G\), we also call it just
a \emph{factor} of \(G\). If it is a left \emph{and} a right factor, we call it a \emph{two-sided}
factor. For example, every subgroup of \(G\) is a left (respectively right) factor with respect to
its right (respectively left) transversal in \(G\), hence it is a two-sided factor of \(G\).

Let \(G\) be a finite group with subsets \(A\) and \(B\) of cardinality \(a\) and \(b\),
respectively. If \(G = A \cdot B\), then \(|G| = ab\). If for the group \(G\) the converse is true
as well, i.e. if for every factorization \(|G| = ab\) of the order of \(G\) into positive integers,
there exist subsets \(A\) and \(B\) of \(G\) such that \(|A| = a\), \(|B| = b\) and
\(G = A \cdot B\), we say that \(G\) has the CFS (`\underline{c}onverse of \underline{f}actorization
by \underline{s}ubsets') property. Problem 20.37 in the Kourovka Notebook~\cite{KourovkaNotebook}
asks whether every finite group has the CFS property (see also~\cite{Hooshmand2014}).
Partial solutions to this problem have been provided in
\cite{BildanovGoryachenkoVasilev2020, Hooshmand2021}. It can be shown that the
CFS property is equivalent to each of the following statements:
\begin{itemize}
  \item for every divisor \(d\) of \(|G|\) there exists a left factor of size \(d\);
  \item for every divisor \(d\) of \(|G|\) there exists a right factor of size \(d\);
  \item for every divisor \(d\) of \(|G|\) there exist a left factor and a right factor
        of size \(d\).
\end{itemize}
But from another perspective, one can imagine a stronger converse of the above proposition.
For the stronger version, we consider the concept of Lagrange subsets of groups (and semigroups)
stated and studied in \cite{Hooshmand2017}. A subset \(A\) of \(G\) is called
a \emph{Lagrange subset} if \(|A|\) divides \(|G|\).
Since \(G = A \cdot B\) and \(G = B \cdot A\) both imply that \(A\) and \(B\) are Lagrange subsets
of \(G\), as the converse, one may ask whether every Lagrange subset is a factor of \(G\).
The answer is negative in general, but we arrive at the next definition.

\begin{StrongCFSGroupDefinition} \label{StrongCFSGroupDefinition}
  We say that a finite group \(G\) has the \emph{strong CFS property} if every Lagrange subset
  is a factor of \(G\).
\end{StrongCFSGroupDefinition}

To better examine such groups, we need the following lemma.

\begin{FactorSubgroupsLemma} \label{FactorSubgroupsLemma}
  Let \(G\) be a group and let \(A \subseteq H \leq G\).
  Then \(A\) is a left, respectively right, factor of \(H\) if and only if it is a left,
  respectively right factor of \(G\). Moreover, the following hold:
\begin{enumerate}[a)]
  \item if \(G = A \cdot B\) for some subset \(B \subseteq G\), then \(H = A \cdot (B \cap H)\), and
  \item if \(H = A \cdot B\) for some subset \(B \subseteq H\), then \(G = A \cdot (BX)\)
        where \(X\) is any right transversal of \(H\) in \(G\).
\end{enumerate}
(Similar results can be obtained for the right case.)
Therefore, \(A\) is a factor (respectively two-sided factor) of \(H\) if and only if
it is a factor (respectively two-sided factor) of \(G\).
\end{FactorSubgroupsLemma}
\begin{proof}
  \begin{enumerate}[a)]

    \item Assume that \(G = A \cdot B\) for some subset \(B \subseteq G\).
          Putting \(C := B \cap H\), we have \(AC \subseteq AH = H\).
          Also, if \(h \in H\), then \(h = ab\) for some \(a \in A\) and \(b \in B\).
          This implies \(h \in AC\) since \(b = a^{-1}h \in A^{-1}H = H\).
          Hence \(H = AC \subseteq AB = A \cdot B\) and so \(H = A \cdot C\).

    \item Assume that \(H = A \cdot B\) for some subset \(B \subseteq H\).
          If \(X\) is a right transversal of \(H\) in \(G\), then
          \(G = H \cdot X = (AB) \cdot X = A \cdot (BX)\).

  \end{enumerate}
\end{proof}

The above lemma has various consequences, one of which is that \(A\) is a factor of \(G\)
if and only if it is a factor of \(\langle A \rangle\). Thus, if \(G\) is finite and \(A\) is not
a Lagrange subset of \(\langle A \rangle\) (i.e., \(|A|\) does not divide \(|\langle A \rangle|\)),
then \(A\) is not a factor of \(G\). The following lemma is another consequence.

\begin{StrongCFSGroupSubgroupsLemma} \label{StrongCFSGroupSubgroupsLemma}
  Every subgroup of a group with the strong CFS property has the strong CFS property as well.
  \end{StrongCFSGroupSubgroupsLemma}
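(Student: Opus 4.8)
The plan is to unwind the definitions and reduce everything to the transfer principle already established in Lemma~\ref{FactorSubgroupsLemma}. So let \(G\) be a finite group with the strong CFS property, let \(H \leq G\) be an arbitrary subgroup, and let \(A \subseteq H\) be an arbitrary Lagrange subset of \(H\), i.e. \(|A|\) divides \(|H|\). The goal is to show that \(A\) is a factor of \(H\); since \(A\) was arbitrary, this will give the strong CFS property for \(H\).

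First I would lift \(A\) to a Lagrange subset of the ambient group. By Lagrange's theorem \(|H|\) divides \(|G|\), and by hypothesis \(|A|\) divides \(|H|\); hence by transitivity of divisibility \(|A|\) divides \(|G|\). Thus \(A\), viewed as a subset of \(G\), is a Lagrange subset of \(G\). This step is the only place where one must be a little careful, since ``Lagrange subset'' is a condition relative to the containing group, but the divisibility chain \(|A| \mid |H| \mid |G|\) settles it immediately.

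Next, since \(G\) has the strong CFS property, Definition~\ref{StrongCFSGroupDefinition} applied to the Lagrange subset \(A \subseteq G\) yields that \(A\) is a factor of \(G\); that is, \(A\) is a left factor or a right factor of \(G\). Finally I would feed this back down into \(H\) using Lemma~\ref{FactorSubgroupsLemma}, which applies precisely to the configuration \(A \subseteq H \leq G\): it asserts that \(A\) is a left (respectively right) factor of \(H\) if and only if it is a left (respectively right) factor of \(G\). Since \(A\) is a factor of \(G\), it is therefore a factor of \(H\), as required.

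I expect no serious obstacle here: all the combinatorial work, namely the construction of a factor complement inside \(H\) from one in \(G\) (part a) and the extension in the reverse direction (part b), has already been carried out in Lemma~\ref{FactorSubgroupsLemma}. The present statement is essentially a bookkeeping corollary that combines that transfer lemma with the observation that the Lagrange condition is preserved when passing from \(H\) to \(G\). The one thing worth stating explicitly in the write-up is that ``factor'' is being used in its two-sided sense, so that the equivalence of Lemma~\ref{FactorSubgroupsLemma} must be applied in whichever of the left or right cases actually holds for \(A\) in \(G\); in either case the conclusion is the same.
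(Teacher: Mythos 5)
Your argument is correct and matches the paper's own proof essentially verbatim: lift \(A\) to a Lagrange subset of \(G\) via the divisibility chain \(|A| \mid |H| \mid |G|\), invoke the strong CFS property of \(G\) to conclude \(A\) is a factor of \(G\), and transfer back down to \(H\) using Lemma~\ref{FactorSubgroupsLemma}. No issues.
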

\begin{proof}
  Let \(G\) be a finite group with the strong CFS property, and let \(H \leq G\) be a subgroup.
  If \(A\) is a Lagrange subset of \(H\), then \(A\) is a Lagrange subset
  and so a factor of \(G\). Thus Lemma~\ref{FactorSubgroupsLemma} implies that \(A\)
  is a factor of \(H\), and the proof is complete.
\end{proof}

\section{Main results} \label{Main}

Now, we are ready to characterize all finite groups which have the strong CFS property.

\begin{StrongCFSGroupTheorem} \label{StrongCFSGroupTheorem}
  The finite groups which have the strong CFS property are precisely the following:
  \begin{enumerate}

    \item The trivial group.

    \item The cyclic groups of prime order.

    \item The groups \({\C{2}}^2\), \(\C{4}\), \({\C{2}}^3\) and \({\C{3}}^2\).

  \end{enumerate}
\end{StrongCFSGroupTheorem}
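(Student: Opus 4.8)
The plan is to prove this as a two-directional characterization: first verify that each listed group has the strong CFS property (the "easy" direction, though tedious), then prove that no other finite group can have it (the "hard" direction, where the real content lies). Let me think about the structure.

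For the listed groups, I'd check them directly. The trivial group and groups of prime order are automatic: their only Lagrange subsets are the whole group, the empty set, and singletons, all of which are trivially factors. The four remaining small groups $C_2^2$, $C_4$, $C_2^3$, $C_3^2$ require checking that every Lagrange subset is a factor — these are small enough to be verified by hand or by computer (GAP, given the RCWA macros, suggests computation is available). For $C_2^2$ and $C_4$ of order 4, the nontrivial Lagrange subsets have size 1, 2, or 4; size-2 subsets need complements of size 2. For order-8 and order-9 groups, size-2 and size-4 (resp. size-3) subsets must all be factors.

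The key reduction for the hard direction comes from Lemma 1.3 (the subgroup lemma): since every subgroup of a strong-CFS group inherits the property, I need to identify "forbidden" small groups that fail strong CFS. If I can show that certain groups — e.g. $C_6$, $C_8$, $C_9$, $C_2 \times C_4$, $S_3$, and larger $p$-groups — fail to have the strong CFS property, then any group containing such a subgroup is excluded. The strategy is to pin down a minimal list of obstructions and show every group *not* on the allowed list contains one as a subgroup.

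**The main obstacle** is constructing, for each forbidden group, an explicit Lagrange subset that is *not* a factor. The cleanest tool is the remark following Lemma 1.2: if $A$ is not a Lagrange subset of $\langle A\rangle$, then $A$ is not a factor. So I'd look for subsets $A$ where $|A|$ divides $|G|$ but $|A|$ fails to divide $|\langle A\rangle|$. For example, in a cyclic group $C_n$ with $n$ having a nontrivial divisor structure, one can often find a subset of size $d \mid n$ that generates a proper subgroup $H$ with $d \nmid |H|$. For primary groups, I expect the decisive cases to be order $p^2$ with $p \geq 5$ (find a size-$p$ subset generating a cyclic subgroup of order $p$ that isn't a factor — or rather, the count of size-$p$ subsets and a pigeonhole/parity obstruction), and the transition from the allowed small cases to larger ones. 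The argument must rule out $C_{p^2}$ for $p \geq 5$, all groups of order $p^3$ except $C_2^3$, $p^2$-groups except $C_2^2, C_4, C_3^2$, and every group whose order has at least two distinct prime factors (likely via a $C_6$ or $S_3$ subgroup obstruction, or a direct non-factor construction).

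Concretely, the proof would proceed: (1) dispatch the allowed groups by direct verification; (2) show any group of composite order with two distinct prime divisors contains a subgroup that fails strong CFS, reducing to $p$-groups; (3) within $p$-groups, use the $\langle A\rangle$-Lagrange obstruction to eliminate all but the listed cases, treating $p=2$, $p=3$, and $p \geq 5$ separately — with $p \geq 5$ killed already at order $p^2$. The delicate step is establishing that a specific size-$d$ subset genuinely has *no* complement of either handedness, which in the non-$\langle A\rangle$-Lagrange cases follows immediately from the lemma, but in borderline cases may require an explicit counting or exhaustion argument.
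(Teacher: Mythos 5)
Your overall architecture (verify the six listed groups, then use Lemma~\ref{StrongCFSGroupSubgroupsLemma} to reduce the converse to a list of minimal forbidden subgroups) is the same as the paper's. Moreover, your central tool for the forbidden groups -- the observation that \(A\) cannot be a factor if \(|A|\) divides \(|G|\) but not \(|\langle A \rangle|\) -- is genuinely effective for every group whose order has two distinct prime divisors: if \(p < q\) are primes dividing \(|G|\), take an element \(a\) of order \(q\) (Cauchy) and \(A := \{a, a^2, \dots, a^p\}\); then \(|A| = p\) divides \(|G|\) but not \(|\langle A \rangle| = q\). This disposes of the entire non-prime-power case in three lines, which is slicker than the paper's one-by-one treatment of \(\Sym3\), \({\rm D}_5\), \(\Alt4\), \(\C{7} \rtimes \C{3}\), \(\C{11} \rtimes \C{5}\), etc. (Note, though, that your suggested alternative route via a \(\C{6}\) or \(\Sym3\) subgroup is false as stated: \(\C{7} \rtimes \C{3}\) contains neither.)

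The genuine gap is in your step (3), the \(p\)-group case -- which is where all the real content of the theorem sits. In a \(p\)-group, every Lagrange subset \(A\) has \(|A|\) a power of \(p\), and \(|\langle A \rangle|\) is a larger or equal power of \(p\), so \(|A|\) \emph{always} divides \(|\langle A \rangle|\): your obstruction provably never applies. You acknowledge this only obliquely ("pigeonhole/parity obstruction", "explicit counting or exhaustion"), but you supply no actual mechanism for ruling out \(\C{8}\), \(\C{9}\), \(\C{4} \times \C{2}\), \({\rm D}_4\), \({\rm Q}_8\), \({\C{2}}^4\), \({\C{3}}^3\), the groups of order \(27\), or \(\C{p^2}\) and \({\C{p}}^2\) for \(p \geq 5\). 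Each of these requires exhibiting a specific Lagrange subset with \emph{no} left or right complement, and proving non-existence of the complement is the hard part: the paper does it via "hole" arguments (a set \(A\) missing the identity but meeting every translate that could cover the hole), via the observation that a half-size subset is a factor only if some translate equals its complement, and via a ball-of-radius-2 construction that bounds the remaining cases to a finite, explicitly checkable list. None of these ideas, nor any substitute, appears in your proposal, so as written it does not prove the theorem for prime-power orders.
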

\begin{proof}
  First we check that the listed groups indeed have the strong CFS property.
  Lacking nontrivial divisors of the group order, for the trivial group and for the cyclic groups
  of prime order this is clear. For the other groups \(G\), we check that for every subset
  \(A \subset G\) of cardinality a nontrivial divisor of \(|G|\), there is a set \(B\)
  of cardinality \(|G|/|A|\) such that \(G = AB\). Without loss of generality, we only need
  to consider the sets \(A\) which contain the identity.
  \begin{itemize}

    \item Let \(G = \langle a, b  \ | \ a^2 = b^2 = [a,b] = 1 \rangle \cong {\C{2}}^2\).
          Then we have
          \(G = \{1,a\} \cdot \{1,b\} = \{1,b\} \cdot \{1,a\} = \{1,ab\} \cdot \{1,a\}\).

    \item Let \(G = \langle a \ | \ a^4 = 1 \rangle \cong \C{4}\). Then we have
          \(G = \{1,a\} \cdot \{1,a^2\} = \{1,a^2\} \cdot \{1,a\} = \{1,a^3\} \cdot \{1,a^2\}\).

    \item Let \(G = \langle a, b, c  \ | \ a^2 = b^2 = c^2 = [a,b] = [a,c] = [b,c] = 1 \rangle
          \cong {\C{2}}^3\).
          Putting \(B_1 := \{1,b,c,bc\}\), \(B_2 := \{1,a,c,ac\}\) and \(B_3 := \{1,a,b,ab\}\),
          we have
          \begin{align*}
            G &= \{1,a\} \cdot B_1 = \{1,ab\} \cdot B_1 = \{1,ac\} \cdot B_1 = \{1,abc\} \cdot B_1 \\
              &= \{1,b\} \cdot B_2 = \{1,bc\} \cdot B_2 = \{1,c\} \cdot B_3.
          \end{align*}
          Further,
          \begin{itemize}

            \item if \(A\) is any of \(\{1,b,c,bc\}\), \(\{1,b,c,abc\}\), \(\{1,c,ab,bc\}\),
                  \(\{1,c,ab,abc\}\), \newline \(\{1,b,ac,bc\}\), \(\{1,b,ac,abc\}\),
                  \(\{1,ab,ac,bc\}\) or \(\{1,ab,ac,abc\}\), \newline
                  we can choose \(B = \{1,a\}\),

            \item if \(A\) is any of \(\{1,a,c,ac\}\), \(\{1,a,c,abc\}\),
                  \(\{1,c,ab,ac\}\), \(\{1,a,ac,bc\}\), \newline
                  \(\{1,a,bc,abc\}\) or \(\{1,ab,bc,abc\}\), we can choose \(B = \{1,b\}\),

            \item if \(A\) is any of \(\{1,a,b,ab\}\), \(\{1,a,b,abc\}\),
                  \(\{1,b.ab,ac\}\), \(\{1,a,ab,bc\}\) \newline
                  or \(\{1,ac,bc,abc\}\), we can choose \(B = \{1,c\}\),

            \item if \(A\) is any of \(\{1,b,c,ac\}\), \(\{1,a,c,bc\}\),
                  \(\{1,b,bc,abc\}\) or \(\{1,a,ac,abc\}\), \newline
                  we can choose \(B = \{1,ab\}\),

            \item if \(A\) is any of \(\{1,b,c,ab\}\), \(\{1,c,bc,abc\}\),
                  \(\{1,a,b,bc\}\) or \(\{1,a,ab,abc\}\), \newline
                  we can choose \(B = \{1,ac\}\),

            \item if \(A\) is any of \(\{1,a,c,ab\}\), \(\{1,c,ac,abc\}\),
                  \(\{1,a,b,ac\}\), \(\{1,b,ab,abc\}\), \newline
                  we can choose \(B = \{1,bc\}\), and

            \item if \(A\) is any of \(\{1,a,b,c\}\), \(\{1,c,ac,bc\}\),
                  \(\{1,b,ab,bc\}\) or \(\{1,a,ab,ac\}\), \newline
                  we can choose \(B = \{1,abc\}\).

          \end{itemize}

    \item Let \(G = \langle a, b \ | \ a^3 = b^3 = [a,b] = 1 \rangle \cong {\C{3}}^2\). Then,
          \begin{itemize}

            \item if \(A\) is any of \(\{1,b,b^2\}\), \(\{1,b,ab^2\}\),
                  \(\{1,b,a^2b^2\}\), \(\{1,ab,b^2\}\), \(\{1,ab,ab^2\}\), \(\{1,ab,a^2b^2\}\),
                  \(\{1,b^2,a^2b\}\), \(\{1,a^2b,ab^2\}\) or \(\{1,a^2b,a^2b^2\}\),
                  we can choose \newline \(B = \{1,a,a^2\}\),

            \item if \(A\) is any of \(\{1,a,a^2\}\), \(\{1,a,a^2b\}\),
                  \(\{1,a,a^2b^2\}\), \(\{1,a^2,ab\}\), \(\{1,a^2,ab^2\}\), \newline
                  \(\{1,ab,a^2b\}\) or \(\{1,ab^2,a^2b^2\}\),
                  we can choose \(B = \{1,b,b^2\}\),

            \item if \(A\) is any of \(\{1,a,b\}\), \(\{1,a,ab^2\}\),
                  \(\{1,b,a^2b\}\), \(\{1,a^2,b^2\}\), \(\{1,a^2,a^2b\}\) \newline
                  or \(\{1,b^2,ab^2\}\), we can choose \(B = \{1,ab,a^2b^2\}\), and

            \item if \(A\) is any of \(\{1,a,ab\}\), \(\{1,a,b^2\}\),
                  \(\{1,b,a^2\}\), \(\{1,b,ab\}\), \(\{1,a^2,a^2b^2\}\) \newline
                  or \(\{1,b^2,a^2b^2\}\), we can choose \(B = \{1,ab^2,a^2b\}\).

          \end{itemize}

  \end{itemize}
  Now we prove that there are no other groups which have the strong CFS property. --
  Let \(G\) be a finite group which is not among the groups listed in the theorem.
  We show that there is a subset \(A \subset G\) of cardinality dividing \(n := |G|\) for which
  there exists no subset \(B \subset G\) of cardinality \(n/|A|\) such that \(G = AB\)
  or \(G = BA\). For this, we distinguish the following cases:
  \begin{enumerate}

    \item The group \(G = \langle a \rangle\) is cyclic. As \(G\) is none of the groups listed
          in the theorem, \(n\) has a proper divisor \(d \geq 3\). Now it is easy to check that
          the set \(A := \{1,a^2,a^3, \dots, a^d\}\) of cardinality \(d\) has the desired property
          (try tiling \(G\) with translates \(Aa^k, k \in \ZZ\) of \(A\) -- the key is the ``hole''
          \(a \notin A\) and having at least two consecutive powers \(a^2,a^3, \dots, a^d\)).

    \item The group \(G = \langle a, b \rangle\) is 2-generated. If we can show that there is
          a subset \(\tilde{A} \subsetneq G\) such that \(|\tilde{A}|-1\) divides \(n\), such that
          \(1 \in \tilde{A}\) and such that for every \(g \in G\) we have
          \begin{itemize}
            \item \(|\tilde{A} \cap \tilde{A}g| > 2\) or
            \item \(1 \notin \tilde{A} \cap \tilde{A}g\),
          \end{itemize}
          we can put \(A := \tilde{A} \setminus \{1\}\), and we are done (the key is the ``hole''
          \(1 \notin A\), which cannot be covered by translates \(Ag\) without overlap with \(A\)).
          Now if \(\tilde{A}\)
          \begin{itemize}
            \item contains the ball of radius 2 about 1 with respect to the generating set
                  \(\{a,b\}\),
            \item corresponds to a connected subgraph of the Cayley graph of \(G\) with respect
                  to that generating set, and
            \item has cardinality one more than a divisor of \(n\),
          \end{itemize}
          then \(\tilde{A}\) fulfills our conditions.
          Since the ball of radius 2 about 1 in the free group of rank 2 has cardinality 17,
          the ball of radius 2 about 1 in \(G\) has cardinality at most 17.
          It follows that if \(n\) has a proper divisor \(d \geq 16\), we are done --
          i.e. we can restrict our further considerations to groups of order 6, 8, 9, 10, 12, 14,
          15, 16, 18, 20, 21, 22, 24, 25, 26, 27, 28, 30, 33, 35, 39, 45, 49, 55, 65, 77, 91,
          121, 143, and 169. Since the ball of radius 2 about 1 in the free abelian group of
          rank~2 has cardinality 13, for abelian groups the bound on \(d\) can be lowered to 12.
          In particular, this rules out the group orders 45, 65, 91, 143, and 169.
          Further, by (1.) and Lemma~\ref{StrongCFSGroupSubgroupsLemma}, we can rule out the
          groups which have cyclic subgroups of composite order \(\geq 6\). This leaves us with
          the following 21 groups to be checked, which in the sequel we will deal with one-by-one:
          \(\Sym3\), \(\C{4} \times \C{2}\), \({\rm D}_4\), \({\rm Q}_8\),
          \({\rm D}_5\), \(\Alt{4}\), \({\rm D}_7\), \({\C{4}}^2\),
          \((\C{4} \times \C{2}) \rtimes \C{2}\), \(\C{4} \rtimes \C{4}\),
          \(\C{5} \rtimes \C{4}\), \(\C{7} \rtimes \C{3}\), \({\rm D}_{11}\), \(\Sym{4}\),
          \({\C{5}}^2\), \({\rm D}_{13}\), \({\C{3}}^2 \rtimes \C{3}\),
          \(\C{13} \rtimes \C{3}\), \({\C{7}}^2\), \(\C{11} \rtimes \C{5}\), and \({\C{11}}^2\).
          \begin{itemize}

            \item Assume \(G = \langle (1,2,3), (1,2) \rangle \cong \Sym3\). Then, the set
                  \(A := \{(),(1,2,3)\}\) is not a factor, since for \(B \subset \Sym3\)
                  of cardinality 3, the products \(AB\) and \(BA\) cannot contain all 3 even
                  and all 3 odd permutations. By Lemma~\ref{StrongCFSGroupSubgroupsLemma},
                  this rules out \(G = \Sym4\) as well.

            \item Assume \(G = \langle a, b \ | \ a^4 = b^2 = [a,b] = 1 \rangle
                  \cong \C{4} \times \C{2}\). Then, the set \(A := \{1,a,a^2,b\}\) is not a factor,
                  since there is no \(g \in G\) such that \(Ag = G \setminus A\).
                  By Lemma~\ref{StrongCFSGroupSubgroupsLemma}, this rules out
                  \(G \cong {\C{4}}^2\), \(G \cong (\C{4} \times \C{2}) \rtimes \C{2}\) and
                  \(G \cong \C{4} \rtimes \C{4}\) as well.

            \item Assume \(G = \langle a, b \ | \ a^4 = b^2 = 1, a^b = a^{-1} \rangle
                  \cong {\rm D}_4\). Then, the set \(A := \{1, a, b, a^2b\}\) is not a factor,
                  since there is no \(g \in G\) such that \(Ag = G \setminus A\) or
                  \(gA = G \setminus A\).

            \item Assume \(G = {\rm Q}_8 = \langle i, j \rangle\).
                  Then, the set \(A := \{1,-1,i,j\}\) is not a factor,
                  since there is no \(g \in G\) such that \(Ag = G \setminus A\) or
                  \(gA = G \setminus A\).

            \item Assume \(G = \langle a, b \ | \ a^5 = b^2 = 1, a^b = a^{-1}  \rangle
                  \cong {\rm D}_5\). Then, the set \(A := \{1, b, ab, a^2, a^4\}\) is not a factor,
                  since there is no \(g \in G\) such that \(Ag = G \setminus A\) or
                  \(gA = G \setminus A\). By Lemma~\ref{StrongCFSGroupSubgroupsLemma},
                  this rules out \(G \cong \C{5} \rtimes \C{4}\) as well.

            \item Assume \(G = \langle (1,2,3), (2,3,4) \rangle \cong \Alt4\). Then, the set
                  \[
                    A := \{(), (1,2,3), (1,3,4), (2,3,4), (2,4,3), (1,2)(3,4)\}
                  \]
                  is not a factor, since there is no \(g \in G\) such that \(Ag = G \setminus A\)
                  or \(gA = G \setminus A\).

            \item Assume \(G = \langle a, b \ | \ a^7 = b^2 = 1, a^b = a^{-1}  \rangle
                  \cong {\rm D}_7\). Then, the set \(A := \{1, b, ab, a^2, a^4, a^5, a^6\}\) is not
                  a factor, since there is no \(g \in G\) such that \(Ag = G \setminus A\) or
                  \(gA = G \setminus A\).

            \item Assume \(G = \langle a, b \ | \ a^7 = b^3 = 1, a^b = a^2 \rangle
                  \cong \C{7} \rtimes \C{3}\). Then, the set
                  \(A := \{a, b, a^{-1}, b^{-1}, ab, a^{-1}b, ab^{-1}\}\) is not a factor,
                  since the ``hole'' \(1 \notin A\) cannot be covered by translates \(Ag\) or
                  \(gA\) \((g \in G)\) of \(A\) without overlap (i.e. \(Ag \cap A = \emptyset\)
                  or \(gA \cap A = \emptyset\)).

            \item Assume \(G = \langle a, b \ | \ a^{11} = b^2 = 1, a^b = a^{-1}  \rangle
                  \cong {\rm D}_{11}\).
                  Then, the set \(A := \{1, b, ab, a^2, a^k (4 \leq k \leq 10)\}\) is not
                  a factor, since there is no \(g \in G\) such that \(Ag = G \setminus A\) or
                  \(gA = G \setminus A\).

            \item Assume \(G = \langle a, b \ | \ a^5 = b^5 = [a,b] = 1 \rangle \cong {\C{5}}^2\).
                  Then, the set \(A := \{1, a, a^2, b, a^2b\}\) is not a factor, since translates
                  \(Ag \ (g \in G)\) of that ``U-shape'' cannot tile a \(5 \times 5\) grid (where
                  we identify opposite sides with each other, i.e. things ``wrap around'') without
                  overlap.

            \item Assume \(G = \langle a, b \ | \ a^{13} = b^2 = 1, a^b = a^{-1}  \rangle
                  \cong {\rm D}_{13}\). Then, the set
                  \(A := \{1, b, ab, a^2, a^k (4 \leq k \leq 12)\}\)
                  is not a factor, since there is no \(g \in G\) such that \(Ag = G \setminus A\)
                  or \(gA = G \setminus A\).

            \item Assume \(G = \langle a, b \ | \ a^3 = b^3 = (ab)^3 = (a^2b)^3 = (ab^2)^3 = 1 \rangle
                  \cong {\C{3}}^2 \rtimes \C{3}\). Then, the set
                  \(A := \{a, b, a^{-1}, b^{-1}, ab, a^{-1}b, ab^{-1}, a^{-1}b^{-1}, ba^{-1}\}\)
                  is not a factor, since all intersections \(A \cap Ag\) and
                  \(A \cap gA \ (g \in G)\) are non-empty.

            \item Assume \(G = \langle a, b \ | \ a^{13} = b^3 = 1, a^b = a^3 \rangle
                  \cong \C{13} \rtimes \C{3}\).
                  Then, the set \(A := \{a, b, a^{-1}, b^{-1}, ab, a^{-1}b, ab^{-1}, a^{-1}b^{-1},
                  ba, ba^{-1}, b^{-1}a, b^{-1}a^{-1}, a^2\}\) is not a factor,
                  since all intersections \(A \cap Ag\) and \(A \cap gA \ (g \in G)\) are non-empty.

            \item Assume \(G = \langle a, b \ | \ a^7 = b^7 = [a,b] = 1 \rangle \cong {\C{7}}^2\).
                  Then, the set \(A := \{a, b, a^{-1}, b^{-1}, ab, a^{-1}b, ab^{-1}\}\) is not a
                  factor, since the ``hole'' \(1 \notin A\) cannot be covered by translates \(Ag\)
                  or \(gA\) \((g \in G)\) of \(A\) without overlap.

            \item Assume \(G = \langle a, b \ | \ a^{11} = b^5 = 1, a^b = a^4 \rangle
                  \cong \C{11} \rtimes \C{5}\).
                  Then, the set \(A := \{a, b, a^{-1}, b^{-1}, ab, a^{-1}b, ab^{-1}, a^{-1}b^{-1},
                  ba, ba^{-1}, b^{-1}a\}\) is not a factor, since the ``hole'' \(1 \notin A\) cannot
                  be covered by translates \(Ag\) or \(gA\) \((g \in G)\) of \(A\) without overlap.

            \item Assume \(G = \langle a, b \ | \ a^{11} = b^{11} = [a,b] = 1 \rangle
                  \cong {\C{11}}^2\).
                  Then, the set \(A := \{a, b, a^{-1}, b^{-1}, ab, a^{-1}b, ab^{-1}, a^{-1}b^{-1},
                  a^2, a^{-2}, b^2\}\) is not a factor, since the ``hole'' \(1 \notin A\) cannot be
                  covered by translates \(Ag\) or \(gA\) \((g \in G)\) of \(A\) without overlap.

          \end{itemize}

    \item The group \(G\) is neither cyclic nor 2-generated.
          If \(G\) has a cyclic subgroup of composite order \(\neq 4\), or if \(G\) has
          a 2-generated subgroup other than \({\C{2}}^2\) and \({\C{3}}^2\),
          we are done by Lemma~\ref{StrongCFSGroupSubgroupsLemma}.
          Therefore, if we can rule out \(G = {\C{2}}^4\) and \(G = {\C{3}}^3\), we are done.
          \begin{itemize}

            \item Assume \(G = \langle a, b, c, d \ | \ a^2 = b^2 = c^2 = d^2 = [a,b] = [a,c]
                  = [a,d] = [b,c] = [b,d] = [c,d] = 1 \rangle \cong {\C{2}}^4\).
                  Then, \(A := \{a, b, c, d, ab, ac, ad, bd\}\) is not a factor, since there is
                  no \(g \in G\) such that \(Ag = G \setminus A\) or \(gA = G \setminus A\).

            \item Assume \(G = \langle a, b, c \ | \ a^3 = b^3 = c^3 = [a,b] = [a,c]
                  = [b,c] = 1 \rangle \cong {\C{3}}^3\).
                  Then, the set \(A := \{a, b, c, a^2, b^2, c^2, ab, ac, bc\}\) is not
                  a factor, since all intersections \(A \cap Ag\) and \(A \cap gA \ (g \in G)\)
                  are non-empty.

          \end{itemize}

  \end{enumerate}
\end{proof}

\begin{StrongCFSGroupCorollary} \label{StrongCFSGroupCorollary}
  If \(G\) is a finite group, then the following statements are equivalent:
  \begin{itemize}
    \item \(G\) has the strong CFS property;
    \item every Lagrange subset of \(G\) is a left factor;
    \item every Lagrange subset of \(G\) is a right factor;
    \item every Lagrange subset of \(G\) is a two-sided factor;
    \item for every Lagrange subset \(A\) there exists \(B \subseteq G\)
          such that \(G = A \cdot B = B \cdot A\);
    \item \(G\) is one of the groups mentioned in Theorem~\ref{StrongCFSGroupTheorem}.
  \end{itemize}
  Therefore, in particular no nonabelian group satisfies any of the above equivalent conditions.
\end{StrongCFSGroupCorollary}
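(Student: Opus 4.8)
The plan is to establish all six statements as equivalent by threading them into a single cycle of implications, delegating the one substantial step to Theorem~\ref{StrongCFSGroupTheorem} and exploiting the fact that every group occurring there is abelian.

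First I would dispose of the implications that are immediate from the definitions. Since a two-sided factor is by definition both a left and a right factor, since a left (or right) factor is a factor, and since the single complement \(B\) in \(G = A \cdot B = B \cdot A\) witnesses that \(A\) is simultaneously a left and a right factor, one reads off \((5) \Rightarrow (4) \Rightarrow (2) \Rightarrow (1)\) together with \((4) \Rightarrow (3) \Rightarrow (1)\) without any computation. The equivalence \((1) \Leftrightarrow (6)\) is nothing but the assertion of Theorem~\ref{StrongCFSGroupTheorem}.

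It remains to supply the return implication \((6) \Rightarrow (5)\), and here the decisive point is that each group listed in Theorem~\ref{StrongCFSGroupTheorem} --- the trivial group, the cyclic groups of prime order, and \({\C{2}}^2\), \(\C{4}\), \({\C{2}}^3\), \({\C{3}}^2\) --- is abelian. In an abelian group \(AB = BA\) as sets for all subsets \(A\) and \(B\), while the finite criterion recalled in the introduction reads \(G = A \cdot B\) if and only if \(AB = G\) and \(|G| = |A||B|\); both of these conditions are symmetric in \(A\) and \(B\) once \(AB = BA\). Hence, for an abelian group, \(G = A \cdot B\) holds if and only if \(G = B \cdot A\) holds, with one and the same complement \(B\), so that the notions of left factor, right factor, and of possessing a common two-sided complement coincide. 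Now \((6)\) gives \((1)\) by Theorem~\ref{StrongCFSGroupTheorem}, whence every Lagrange subset \(A\) of such a group is a left or a right factor, say \(G = A \cdot B\); commutativity then upgrades this to \(G = A \cdot B = B \cdot A\), which is exactly \((5)\).

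Assembling these yields the closed cycle \((5) \Rightarrow (4) \Rightarrow (2) \Rightarrow (1) \Rightarrow (6) \Rightarrow (5)\), so \((1), (2), (4), (5), (6)\) are mutually equivalent; and since \((4) \Rightarrow (3) \Rightarrow (1)\), statement \((3)\) is caught between two members of this class and hence joins it. The concluding remark is then immediate: by \((1) \Leftrightarrow (6)\) any group satisfying the conditions is one of the listed groups, all of which are abelian, so no nonabelian group can satisfy any of them. I expect no real obstacle in the corollary --- the entire weight of the argument sits in Theorem~\ref{StrongCFSGroupTheorem} --- the only place asking for a moment's care being the verification that \(G = A \cdot B\) is symmetric in \(A\) and \(B\) for abelian \(G\), which I would reduce to the cardinality criterion \(|G| = |A||B|\).
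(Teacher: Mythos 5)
Your proof is correct; the paper states the corollary without proof, and your argument---trivial implications among the first five conditions, Theorem~\ref{StrongCFSGroupTheorem} for the equivalence with the list, and the observation that all listed groups are abelian so that \(G = A \cdot B\) and \(G = B \cdot A\) coincide via \(AB = BA\) and the cardinality criterion---is exactly the reasoning the authors evidently intend. No gaps.
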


\bibliographystyle{amsplain}

\end{document}